\theoremstyle{plain}
\newtheorem{theorem}{Theorem}[section]
\newtheorem{lemma}[theorem]{Lemma}
\newtheorem{proposition}[theorem]{Proposition}
\newtheorem{corollary}[theorem]{Corollary}
\theoremstyle{definition}
\theoremstyle{remark}
\newtheorem*{remark}{Remark}
\begin{document}

\title{Spectral Bounds for Polydiagonal Jacobi Matrix Operators}
\author{Arman Sahovic}
\date{\today}
\maketitle

\begin{abstract}

The research on spectral inequalities for discrete Schr\"odinger Operators has proved fruitful in the last decade. Indeed, several authors analysed the operator's canonical relation to a tridiagonal Jacobi matrix operator. In this paper, we consider a generalisation of this relation with regards to connecting higher order Schr\"odinger-type operators with symmetric matrix operators with arbitrarily many non-zero diagonals above and below the main diagonal. We thus obtain spectral bounds for such matrices, similar in nature to the Lieb--Thirring inequalities.

\end{abstract}

\section{Background}

Let $W$ be the self-adjoint Jacobi matrix operator acting on $\ell^2(\Bbb Z)$:

\begin{equation*}
W = \left(
\begin{array}{cccccc}
\ddots & \vdots & \vdots & \vdots & \vdots & \udots\\
\ldots & b_{-1} & a_{-1} & 0   & 0   & \ldots \\
\ldots & a_{-1} & b_0 & a_0 & 0   & \ldots \\
\ldots & 0   & a_0 & b_1 & a_1   & \ldots \\
\ldots & 0   & 0   & a_1 & b_2 & \ldots \\
\udots & \vdots & \vdots & \vdots & \vdots & \ddots
\end{array} \right),
\end{equation*}
\vspace{0.2cm}
\noindent via:
\begin{equation*}
(W\varphi)(n)=a_{n-1}\varphi(n-1)+b_n\varphi(n)+a_n\varphi(n+1), \quad \textup{for } n\in\mathbb Z,
\end{equation*}

where $a_n> 0$ and $b_n\in\Bbb R$.
This operator can be viewed as the one-dimensional discrete Schr\"odinger operator if $a_n= 1$ for all $n$.
A variety of papers examined such operators, for example, we quote the work by R. Killip and B. Simon in \cite{KS03}, where they obtained sum rules for such Jacobi matrices.
Additionally, D. Hundertmark and B. Simon in \cite{HS02} were able to find spectral bounds for these operators. We thus state their result:

If $a_n\to 1$, $b_n\to 0$ rapidly enough, as $n\to\pm\infty$, the essential  spectrum $\sigma_{ess}(W)$ of W is absolutely continuous and coincides with the interval $[-2,2]$ (see for example \cite{Bak99}). Besides, $W$ may have simple eigenvalues $\{E_j^\pm\}_{j=1}^{N_\pm}$ where $N_\pm\in \overline{\mathbb{N}}(:=\mathbb N \cup \{\infty\})$, and
$$
E_1^{+}>E_2^{+}>...>2>-2>...>E_2^{-}>E_1^{-}.
$$

\noindent Indeed, they found:
\begin{theorem}
If $\{b_n\}_{n\in\Bbb Z}, \, \{a_n-1\}_{n\in\Bbb Z}\in \ell^{\gamma+1/2}(\mathbb{Z})$, $\gamma\geq1/2$, then
\begin{equation}\label{HS2}
\sum_{j=1}^{N_+}|E^+_j-2|^\gamma+\sum_{j=1}^{N_-}|E^-_j+2|^\gamma \leq k_\gamma\left[\sum_{n=-\infty}^\infty |b_n|^{\gamma+1/2}
+4\sum_{n=-\infty}^\infty|a_n-1|^{\gamma+1/2}\right]
\textup{,}
\end{equation}
where
$$
k_\gamma=2(3^{\gamma-1/2})L^{cl}_{\gamma,1},
\qquad
\text{and}
\qquad
L^{cl}_{\gamma,1}=\frac{\Gamma(\gamma+1)}{2\sqrt{\pi}\,\,\Gamma(\gamma+3/2)}.
$$

\end{theorem}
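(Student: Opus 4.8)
The plan is to separate the two spectral edges $\pm 2$, turn each into a low‑energy problem for a genuine discrete Schr\"odinger operator, reduce the exponent to the critical value $\gamma=1/2$ by an Aizenman--Lieb lifting, and then feed in the sharp one–dimensional Lieb--Thirring constant $L^{cl}_{\gamma,1}$. First I would write $W_0 = 2I + \Delta$, where $(\Delta\varphi)(n)=\varphi(n-1)-2\varphi(n)+\varphi(n+1)$ is the discrete Laplacian, so that $2I - W = -\Delta + V$ with $V=-(W-W_0)$. Since $-\Delta\ge 0$ has spectrum $[0,4]$, the eigenvalues $E_j^+>2$ of $W$ are exactly the negative eigenvalues of $-\Delta+V$, with $|E_j^+-2|$ their moduli. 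The lower edge is symmetric: the staggering unitary $\varphi_n\mapsto(-1)^n\varphi_n$ sends $4I+\Delta$ to $-\Delta$ and reverses the sign of every off-diagonal entry, so the $E_j^-<-2$ reduce to negative eigenvalues of an operator of the same form with $(a_n-1)$ replaced by $-(a_n-1)$. Thus it suffices to estimate $\sum_j|\mu_j|^\gamma$ for the negative eigenvalues $\mu_j$ of a discrete Schr\"odinger operator $-\Delta+V$.

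Next I would dispose of the off-diagonal part of $V$, which is not a multiplication operator. Using $2|a_n-1|\,|\varphi_n\overline{\varphi_{n+1}}|\le|a_n-1|(|\varphi_n|^2+|\varphi_{n+1}|^2)$ together with $b_n\le (b_n)_+$, one gets the form inequality $-\Delta+V\ge -\Delta-U$ with the diagonal potential $U_n=(b_n)_++|a_n-1|+|a_{n-1}-1|$ at the upper edge (and $U_n=(b_n)_-+|a_n-1|+|a_{n-1}-1|$ at the lower one). By the min–max principle the negative eigenvalues of $-\Delta+V$ are bounded below by those $\nu_j$ of the honest discrete Schr\"odinger operator $-\Delta-U$, so $\sum_j|\mu_j|^\gamma\le\sum_j|\nu_j|^\gamma$. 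Keeping $(b_n)_+$ and $(b_n)_-$ for the two edges separately is precisely what will yield the asymmetric coefficients $1$ and $4$ in the final bound.

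The core is then a sharp Lieb--Thirring inequality for $-\Delta-U$. I would prove the critical case first, namely $\sum_j|\nu_j|^{1/2}\le \tfrac12\sum_n U_n=2L^{cl}_{1/2,1}\sum_n U_n$, and deduce every $\gamma>1/2$ by the Aizenman--Lieb lifting: integrating the identity $t_+^\gamma=c_\gamma\int_0^\infty(t-\mu)_+^{1/2}\mu^{\gamma-3/2}\,d\mu$ against the $\gamma=1/2$ bound applied to the shifted potentials $(U-\mu)_+$. Here shifting the spectral parameter is just shifting the potential, because $\sigma(-\Delta)$ has a single edge at $0$, so the lifting is clean and reproduces the constant $2L^{cl}_{\gamma,1}$ for all $\gamma\ge 1/2$. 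Finally I would pass from $\sum_n U_n^{\gamma+1/2}$ to the stated right–hand side by the convexity estimate $(x+y+z)^{\gamma+1/2}\le 3^{\gamma-1/2}(x^{\gamma+1/2}+y^{\gamma+1/2}+z^{\gamma+1/2})$, valid since $\gamma+1/2\ge 1$; after re-indexing this produces the factor $3^{\gamma-1/2}$ and, summing both edges and using $(b_n)_+^{\gamma+1/2}+(b_n)_-^{\gamma+1/2}=|b_n|^{\gamma+1/2}$, assembles exactly $k_\gamma\bigl[\sum|b_n|^{\gamma+1/2}+4\sum|a_n-1|^{\gamma+1/2}\bigr]$.

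The hard part will be the sharp critical bound at $\gamma=1/2$, and everything else is comparatively soft. The naive argument fails: with the explicit free resolvent $(-\Delta+\kappa)^{-1}(n,n)=(\kappa(\kappa+4))^{-1/2}$, the counting bound $N(\kappa)\le \mathrm{Tr}\,K_\kappa=(\kappa(\kappa+4))^{-1/2}\sum_n U_n$ gives a logarithmically divergent $\mu$-integral at the edge, which is the analytic signature that $\gamma=1/2$ is exactly critical and that the extremal configuration is a single bound state—indeed a one-site well already saturates the constant $2L^{cl}_{1/2,1}$, so no loss is permissible there. I would therefore follow the Hundertmark--Lieb--Thomas strategy, establishing sharpness for a single bound state and controlling the full trace by an iterated (operator-valued) Birman--Schwinger argument, using the geometric decay $\beta^{|m-n|}$ of the discrete resolvent in place of the continuum exponential. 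This critical estimate is where the genuine work lies; the reductions, the lifting, and the constant bookkeeping are routine once it is in hand.
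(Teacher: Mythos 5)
This theorem is not proved in the paper at all --- it is quoted verbatim from Hundertmark and Simon \cite{HS02}, and what you have written is essentially a reconstruction of \emph{their} proof, which is the correct provenance. Your reductions are all sound and match the scheme the paper itself later uses for its generalisation in Section 6: separating the two spectral edges (the staggering unitary $\varphi(n)\mapsto(-1)^n\varphi(n)$ handles the lower edge), dominating the off-diagonal perturbation by the diagonal potential $(b_n)_\pm+|a_n-1|+|a_{n-1}-1|$ via $2|a_n-1||\varphi_n\overline{\varphi_{n+1}}|\le|a_n-1|(|\varphi_n|^2+|\varphi_{n+1}|^2)$ (this is exactly the paper's $2\times2$ block matrix bound), Aizenman--Lieb lifting, and the three-term Jensen estimate producing $3^{\gamma-1/2}$; your bookkeeping of the constants, including $(b_n)_+^{\gamma+1/2}+(b_n)_-^{\gamma+1/2}=|b_n|^{\gamma+1/2}$ and the factor $4$ from reindexing over both edges, checks out. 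Where your route genuinely diverges from the machinery this paper builds is the core Lieb--Thirring input: the paper proves its analogue (Theorem \ref{L-Tsigma}) at $\gamma=1$ by the Eden--Foias/Dolbeault--Laptev--Loss orthonormal-system argument (Propositions \ref{AgmonSigma} and \ref{DGSI}), which is elementary but only reaches $\gamma\ge1$ and a non-semiclassical constant, whereas you go through the critical bound $\sum_j|\nu_j|^{1/2}\le\tfrac12\sum_nU_n$, which is what is needed to reach $\gamma\ge1/2$ and the constant $2\cdot3^{\gamma-1/2}L^{cl}_{\gamma,1}$ in the statement. The one real shortcoming is that this critical $\gamma=1/2$ estimate is the entire analytic content of the theorem, and you invoke the Hundertmark--Lieb--Thomas iterated Birman--Schwinger argument by name rather than executing it; you do correctly diagnose why the naive trace bound fails (logarithmic divergence of $(\kappa(\kappa+4))^{-1/2}$ at the edge) and why no loss is permissible there, so the skeleton is right, but as written the load-bearing lemma is outsourced to a citation.
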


The author (see \cite{Sah10}) then improved their result, achieving the smaller constant:
$k_\gamma=3^{\gamma-1}\,\pi\,L^{cl}_{\gamma,1}$, by translating a well-known method employed by A. Laptev, Dolbeaut and Loss in \cite{DLL08} to the discrete scenario. They, in turn, used a simple argument by A. Eden and Foias (see \cite{EF91}) to obtain improved constants for Lieb-Thirring inequalities in one dimension.

The aim of this paper is to answer the natural question of whether these methods can be generalised to give bounds for higher order Schr\"odinger-type operators and thus 'polydiagonal' Jacobi-type matrix operators, which we shall define below.

\section{Notation and Preliminary Material}

For a sequence $\{\varphi(n)\}_{n\in\Bbb Z}$, let $D$ and $D^*$ be the difference operator and its adjoint respectively, denoted by
$
D {\varphi(n)} = {\varphi(n+1)-\varphi(n)},$ and $ D^* \varphi(n) = \varphi(n-1)-\varphi(n).
$ We then denote the discrete one-dimensional Laplacian by $\Delta_D:=D^*D=-\varphi(n+1)+2\varphi(n)-\varphi(n-1)$. For $\sigma\in\Bbb N,\,\,n\in\Bbb Z$ and a sequence $\varphi\in\ell^2(\Bbb Z)$, $\Delta_D^\sigma$ will be defined by:
$$
(\Delta_D^\sigma\varphi)(n):=(\Delta_D (\Delta_D^{\sigma-1}\varphi))(n).
$$

\noindent We note that $\Delta_D$ being self-adjoint immediately implies that $\Delta_D^\sigma$ is also self-adjoint. 

Finding an explicit formula for $\Delta_D^\sigma$ requires a few combinatorial techniques, all of which are standard:
Let $^{a}C_{b}:=\binom{a}{b}:=a!/((a-b)!b!)$, for $a,\,b\,\in\mathbb{Z}$. Then we have:
(i)
$^aC_b+^aC_{b+1}= {^{a+1}C_{b+1}}$,
(ii)
$2\,{^aC_0}+{^aC_1}={^{a+2}C_1}$,
(iii)
 $2\,{^aC_a}+{^aC_{a-1}}={^{a+2}C_{a+1}}$.

\noindent A simple induction argument then delivers  our formula for the $\sigma^{th}$ order discrete Laplacian operator:
$$
(\Delta_D^\sigma\varphi)(n)=\sum_{k=0}^{2\sigma}\,{^{2\sigma}C_{k}}(-1)^{k+\sigma}\varphi(n-\sigma+k).
$$

\noindent Furthermore, in order to identify our essential spectrum, we apply the discrete fourier transform:
$$
        \mathcal{F}(\Delta_D^\sigma\varphi)(x)  =  \sum_{n\in\mathbb{Z}} e^{inx}\Bigl(\sum_{k=0}^{2\sigma}{^{2\sigma}C_{k}}(-1)^{k+\sigma}\varphi(n-\sigma+k)\Bigr)
$$

\noindent which, after some rearrangement, yields:
$$
\mathcal{F}(\Delta_D^\sigma\varphi)(x)
= 
\Bigl[{^{2\sigma}C_{\sigma}}+2\sum^{\sigma-1}_{k=0} \,\,{^{2\sigma}C_{k}}(-1)^{k+\sigma}\cos((\sigma-k)x)\Bigr](\mathcal{F}\varphi)(x).
$$
The essential spectrum of the operator $\Delta_D^\sigma$ will thus be the range of the above symbol, which can be found to be $\varsigma_{\text{ess}}(\Delta^\sigma_D)=[0,4^\sigma]$.
\section{Main Results}

We now let $\{\psi_j\}_{j=1}^N$, $N\in\overline{\mathbb{N}}$ be the orthonormal system 
of eigensequences in $\ell^2(\Bbb Z)$ corresponding to the negative eigenvalues $\{e_j\}_{j=1}^{N}$
of the $(2\sigma)^{th}$ order discrete Schr\"odinger-type operator:
\begin{equation}\label{N-Schrodinger}
(H^\sigma_D\psi_j)(n):=(\Delta_D^{\sigma}\psi_j)(n) - b_n\psi_j(n) = e_j \psi_j(n),
\end{equation}
where $j\in\{1,\ldots,N\}$ and we assume that $b_n\geq0$ for all $n\in\Bbb{Z}$.
 Our next result is concerned with estimating those negative eigenvalues:

\begin{theorem} \label{L-Tsigma}

Let $b_n\geq 0$, $\{b_n\}_{n\in\Bbb Z} \in \ell^{\gamma+1/2\sigma}(\mathbb Z)$, $\gamma \ge 1$. Then the negative eigenvalues  $\{e_j\}_{j=1}^N$ of the operator $H^\sigma_D$ satisfy the inequality 
\begin{eqnarray*}\label{eq:AisLiebN}     	%
\sum^{N}_{j=1} |e_j|^\gamma   &\leq &     \eta_{\sigma}^{\gamma} \sum_{n\in\Bbb Z} 	b_n^{\gamma + 1/2\sigma},
\end{eqnarray*}
where
$$
\eta_{\sigma}^{\gamma}\,\,:=\,\,\frac{2\sigma}{(2\sigma+1)^{(2\sigma+1)/2\sigma}}\frac{\Gamma(\frac{4\sigma+1}{2\sigma})\Gamma(\gamma+1)}{\Gamma(\gamma+\frac{2\sigma+1}{2\sigma})}.
$$
\end{theorem}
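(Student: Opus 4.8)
The plan is to prove the estimate first in the base case $\gamma=1$, and then to raise the exponent by an Aizenman--Lieb lifting argument, arranging the two steps so that they combine to give precisely the constant $\eta_\sigma^\gamma$. For the base case I would begin from the eigenvalue equation \eqref{N-Schrodinger}: pairing it with $\psi_j$ in $\ell^2(\mathbb Z)$ and summing over $j$ gives, since each $e_j<0$,
\[
\sum_{j=1}^N |e_j| \;=\; \sum_{n\in\mathbb Z} b_n\,\rho(n)\;-\;\sum_{j=1}^N\langle \Delta_D^\sigma\psi_j,\psi_j\rangle,\qquad \rho(n):=\sum_{j=1}^N|\psi_j(n)|^2 .
\]
Everything then reduces to bounding the kinetic term $\sum_j\langle\Delta_D^\sigma\psi_j,\psi_j\rangle$ from below by a local functional of the one-site density $\rho$.

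The crux is therefore a discrete, higher-order analogue of the Lieb--Thirring kinetic-energy (dual Sobolev) inequality of Eden and Foias: for every orthonormal system $\{\psi_j\}\subset\ell^2(\mathbb Z)$ one should have $\sum_j\langle\Delta_D^\sigma\psi_j,\psi_j\rangle \ge \sum_{n\in\mathbb Z}\rho(n)^{2\sigma+1}$. I expect this to be the main obstacle. To prove it I would pass to the Fourier side, where by the computation of Section 2 the operator $\Delta_D^\sigma$ acts as multiplication by the symbol $g(x)=(2\sin(x/2))^{2\sigma}$ with range $[0,4^\sigma]$, and adapt the elementary orthonormality argument of Eden--Foias \cite{EF91} to this discrete setting. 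The exponent $2\sigma+1$ is dictated by the scaling of an order-$2\sigma$ operator in one dimension, and the delicate point is that the constant must come out to be exactly $1$, since (as the computation below shows) any other value would spoil the match with $\eta_\sigma^\gamma$.

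Granting this inequality, the base case $\gamma=1$ follows by a pointwise Legendre optimisation. Indeed
\[
\sum_{j=1}^N|e_j|\;\le\;\sum_{n\in\mathbb Z}\bigl(b_n\,\rho(n)-\rho(n)^{2\sigma+1}\bigr)\;\le\;\sum_{n\in\mathbb Z}\ \max_{r\ge0}\bigl(b_n\,r-r^{2\sigma+1}\bigr),
\]
and maximising $b\,r-r^{2\sigma+1}$ at $r=(b/(2\sigma+1))^{1/2\sigma}$ yields $\sum_j|e_j|\le \eta_\sigma^1\sum_n b_n^{\,1+1/2\sigma}$ with $\eta_\sigma^1=2\sigma/(2\sigma+1)^{(2\sigma+1)/2\sigma}$, which is exactly the value of $\eta_\sigma^\gamma$ at $\gamma=1$.

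To reach general $\gamma\ge1$ I would finally invoke the Aizenman--Lieb identity $\lambda^\gamma=\gamma(\gamma-1)\int_0^\infty(\lambda-t)_+\,t^{\gamma-2}\,dt$. Applying it to each $|e_j|$, and using the variational (min--max) comparison coming from $\Delta_D^\sigma-(b_n-t)_+\le(\Delta_D^\sigma-b_n)+t$ to get $\sum_j(|e_j|-t)_+\le\sum_k|e_k(\Delta_D^\sigma-(b_n-t)_+)|$, I can feed the shifted potential $(b_n-t)_+$ into the base case. Integrating the resulting bound $\eta_\sigma^1\sum_n(b_n-t)_+^{\,1+1/2\sigma}$ against $t^{\gamma-2}\,dt$ produces the Beta integral $B(\gamma-1,2+\tfrac1{2\sigma})$; since $\gamma(\gamma-1)\Gamma(\gamma-1)=\Gamma(\gamma+1)$, the prefactor collapses to $\Gamma(\tfrac{4\sigma+1}{2\sigma})\Gamma(\gamma+1)/\Gamma(\gamma+\tfrac{2\sigma+1}{2\sigma})$, and multiplication by $\eta_\sigma^1$ reproduces $\eta_\sigma^\gamma$ exactly. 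The only remaining care-points are the convergence bookkeeping for the possibly infinite family $\{e_j\}$ (legitimate under the $\ell^{\gamma+1/2\sigma}$ hypothesis) and the justification of the min--max comparison, both of which are routine.
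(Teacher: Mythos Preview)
Your proposal is correct and follows essentially the same two-step architecture as the paper: establish the case $\gamma=1$ from the discrete kinetic inequality $\sum_n\rho(n)^{2\sigma+1}\le\sum_j\|D^\sigma\psi_j\|_{\ell^2}^2$, then lift to $\gamma\ge1$ via Aizenman--Lieb. The only tactical differences are that the paper proves the kinetic inequality (its Proposition~\ref{DGSI}) through a real-space Agmon--Kolmogorov bound $\|\varphi\|_{\ell^\infty}\le\|\varphi\|_{\ell^2}^{1-1/2\sigma}\|D^\sigma\varphi\|_{\ell^2}^{1/2\sigma}$ obtained from a discrete Kolmogorov interpolation (Lemma~\ref{Ditzian}) rather than on the Fourier side, and that for the base case it applies H\"older globally and then minimises the one-variable function $\chi\mapsto\chi^{2\sigma+1}-\varsigma\chi$, whereas you optimise site by site via the pointwise Legendre transform; both routes yield the identical constant $\eta_\sigma^1$, and the Aizenman--Lieb lifting step is the same.
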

\begin{remark}
As the discrete spectrum of $H^\sigma_D$ lies in $[-\infty,0]$ and $[4^\sigma,\infty]$, we shift our operator to the left by $4^\sigma$ and by analogy have an estimate for the positive eigenvalues of that operator, thus immediately obtaining Corollary \ref{LiebGammaN+}:
\end{remark}


\begin{corollary}\label{LiebGammaN+}  
Let $b_n\geq 0$, $\{b_n\}_{n\in\Bbb Z} \in \ell^{\gamma+1/2\sigma}(\mathbb Z)$, $\gamma \ge 1$. Then the positive eigenvalues  $\{e_j\}_{j=1}^{N}$ of the operator $\Delta_D^{\sigma}-4^\sigma+b$  satisfy the inequality 
$$     	%
\qquad\qquad\qquad\qquad\qquad\sum_{j=1}^{N} e_j^\gamma   \leq       \eta_{\sigma}^{\gamma}\sum_{n\in{\Bbb Z}} 	b_n^{\gamma + 1/2\sigma},\qquad\qquad\qquad\text{with}\,\,\,\eta_\sigma^\gamma\,\,\text{given above}.
$$
\end{corollary}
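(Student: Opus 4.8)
The plan is to reduce the corollary to Theorem~\ref{L-Tsigma} by reflecting the spectrum about the top $4^\sigma$ of $\sigma_{ess}(\Delta_D^\sigma)=[0,4^\sigma]$, which is the precise content of the ``shift by $4^\sigma$'' in the preceding remark. First I would negate: if $e_j>0$ is an eigenvalue of $B:=\Delta_D^{\sigma}-4^\sigma+b$, then $-e_j<0$ is an eigenvalue of $-B=(4^\sigma-\Delta_D^\sigma)-b$. Writing $K:=4^\sigma-\Delta_D^\sigma$, the operator $K-b$ has exactly the form ``non-negative kinetic term minus the potential $b$'' treated in the theorem, with $\sigma_{ess}(K)=[0,4^\sigma]$ (the symbol of $K$ is $4^\sigma$ minus the symbol of $\Delta_D^\sigma$ computed in Section~2). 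Since $e_j^\gamma=|{-e_j}|^\gamma$, it suffices to bound the Riesz mean of order $\gamma$ of the negative eigenvalues of $K-b$.

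The naive hope is that $K-b$ is unitarily equivalent to $H^\sigma_D=\Delta_D^\sigma-b$, so that Theorem~\ref{L-Tsigma} applies verbatim. This holds for $\sigma=1$ but fails for $\sigma>1$, and recognising this gap is the crux of the argument. Under the staggering unitary $(U\varphi)(n)=(-1)^n\varphi(n)$, which is its own inverse, one checks directly that $U\Delta_D U=4-\Delta_D$, hence $U\Delta_D^\sigma U=(4-\Delta_D)^\sigma=:\tilde K$. For $\sigma>1$ the operator $\tilde K$ is genuinely distinct from $K=4^\sigma-\Delta_D^\sigma$, so one cannot simply quote the theorem.

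The resolution I would use is a convexity comparison combined with monotonicity of eigenvalues. Since $\Delta_D$ is self-adjoint with spectrum $[0,4]$ and $t\mapsto t^\sigma$ is convex, the map $\lambda\mapsto\lambda^\sigma+(4-\lambda)^\sigma$ is convex on $[0,4]$ and therefore attains its maximum $4^\sigma$ at the endpoints; thus $(4-\lambda)^\sigma\le 4^\sigma-\lambda^\sigma$ throughout $[0,4]$. By the spectral theorem this gives the operator inequality $\tilde K=(4-\Delta_D)^\sigma\le 4^\sigma-\Delta_D^\sigma=K$, whence $\tilde K-b\le K-b$. The min--max principle then yields $\mu_k(\tilde K-b)\le\mu_k(K-b)$ for every index $k$, so each negative eigenvalue of $K-b$ is dominated in modulus by the corresponding negative eigenvalue of $\tilde K-b$, and $K-b$ has no more negative eigenvalues than $\tilde K-b$. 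Consequently the negative Riesz means of order $\gamma$ are ordered, the one for $K-b$ being the smaller.

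Finally I would remove the tilde by unitarity. As $U$ is diagonal, $UbU=b$, so $\tilde K-b=U(\Delta_D^\sigma-b)U=U H^\sigma_D U$ is unitarily equivalent to $H^\sigma_D$ and hence has the same negative eigenvalues $\{e_j\}$. Theorem~\ref{L-Tsigma} bounds their Riesz mean by $\eta_\sigma^\gamma\sum_n b_n^{\gamma+1/2\sigma}$. Chaining the three steps---negation, the comparison $K\ge\tilde K$ with eigenvalue monotonicity, and the equivalence $\tilde K-b\simeq H^\sigma_D$---delivers $\sum_j e_j^\gamma\le\eta_\sigma^\gamma\sum_n b_n^{\gamma+1/2\sigma}$ with the identical constant. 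The main obstacle, as noted, is precisely the failure of direct unitary equivalence for $\sigma>1$; the convexity estimate circumvents it at no cost to the constant, since monotonicity only ever improves the bound.
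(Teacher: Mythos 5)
Your proposal is correct, and it is in fact more careful than the paper, which disposes of this corollary in a one-line remark (``we shift our operator to the left by $4^\sigma$ and by analogy have an estimate for the positive eigenvalues''). You have identified the real content hiding behind that ``analogy'': after negating, one must run Theorem~\ref{L-Tsigma} for the kinetic term $K=4^\sigma-\Delta_D^\sigma$ in place of $\Delta_D^\sigma$, and for $\sigma>1$ this is not a literal unitary equivalence, since the staggering unitary gives $U\Delta_D^\sigma U=(4-\Delta_D)^\sigma\neq 4^\sigma-\Delta_D^\sigma$. Your repair is sound: the convexity of $\lambda\mapsto\lambda^\sigma+(4-\lambda)^\sigma$ on $[0,4]$ gives the pointwise bound $(4-\lambda)^\sigma\le 4^\sigma-\lambda^\sigma$, hence the operator inequality $(4-\Delta_D)^\sigma\le 4^\sigma-\Delta_D^\sigma$ by the spectral calculus, and then min--max monotonicity of the eigenvalues below the common bottom $0$ of the essential spectrum (which both operators share, as $b_n\to 0$) reduces everything to $U(\Delta_D^\sigma-b)U$, i.e.\ to Theorem~\ref{L-Tsigma} with the same constant. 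Equivalently, your operator inequality is precisely what is needed to propagate the quadratic-form estimate $\langle(4^\sigma-\Delta_D^\sigma)\varphi,\varphi\rangle\ge\|D^\sigma U\varphi\|_{\ell^2(\mathbb{Z})}^2$ through Propositions~\ref{AgmonSigma} and~\ref{DGSI}, which is presumably what the author intends but does not write down. The argument is complete and the constant $\eta_\sigma^\gamma$ is preserved.
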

\bigskip

\noindent Finally we will apply these results to obtain spectral bounds for
the following operator:

\noindent We let $W_{\sigma}$ be a polydiagonal self-adjoint Jacobi-type matrix operator: 

\begin{equation*}
W_\sigma := \small\left(
\begin{array}{cccccccccc}
\ddots & \ddots & \ddots & \ddots & \ddots  & \ddots &\ddots &\ddots & \udots\\
\ddots & \ddots & \ddots & \ddots & \ddots & a^\sigma_{-2} & 0  &  0 & \ddots \\
\ddots & \ddots & b_{-1} & a^1_{-1} & \ddots & \ddots & a^\sigma_{-1} &0 & \ddots \\
\ddots & \ddots & a^1_{-1} & b_{0} & a^1_0 & \ddots & \ddots &a^\sigma_{0} & \ddots \\
\ddots & \ddots & \ddots & a_0^1  & b_{1} & a^1_1 & \ddots  &\ddots  &  \ddots \\
\ddots & a^\sigma_{-2} & \ddots & \ddots & a^1_{1} & b_2  & a^1_2   &\ddots &  \ddots \\
\ddots & 0 & a^\sigma_{-1} & \ddots & \ddots  & a^1_2  & b_3  &\ddots   &  \ddots \\
\ddots & 0 & 0 & a^\sigma_{0}  & \ddots & \ddots  &  \ddots & \ddots & \ddots \\
\udots & \ddots& \ddots & \ddots & \ddots  & \ddots & \ddots  &  \ddots &\ddots
\end{array} \right),\\
\end{equation*}

\bigskip

\noindent viewed as an operator acting on $\ell^2(\Bbb{Z})$ as follows: For $n\in\mathbb Z,i\in\mathbb \{1,\ldots,\sigma\}$:
\begin{eqnarray*}\label{Wsigma}
(W_{\sigma} \varphi)(n) &=& \sum_{i=1}^{\sigma}a^i_{n-i} \varphi(n-i)+b_n \varphi(n)+\sum_{i=1}^{\sigma}a^i_{n} \varphi(n+i)\\
&=&
a^\sigma_{n-\sigma} \varphi(n-\sigma)+\ldots+
a^1_{n-1} \varphi(n-1)+b_n \varphi(n)+a^1_n \varphi(n+1)+
\ldots+a^\sigma_{n} \varphi(n+\sigma),
\end{eqnarray*}

where $a^i_n$, $b_n\in\mathbb R$, for all $ i\in\mathbb \{1,\ldots,\sigma\}$. We denote $(W_\sigma(\{a_n^1\},\ldots,\{a_n^\sigma\},\{b_n\})\varphi)(n):=(W_\sigma\varphi)(n)$ where we understand $\{.\}$ to mean $\{.\}_{n\in\Bbb Z}$. We are then interested in perturbations of the following special case:
\begin{equation*}
(W_\sigma^0\varphi)(n):=(W_\sigma(\{a_n^1\equiv\omega_1\},\ldots,\{a_n^\sigma\equiv\omega_\sigma\},\{b_n\equiv 0\})\varphi)(n),
\end{equation*}
\noindent where $\omega_i:= {^{2\sigma}C_{\sigma+i}}(-1)^{i}$, and explicitly:
\begin{equation*}
(W_\sigma^0\varphi)(n)=((\Delta_D^\sigma-{^{2\sigma}C_{\sigma}})\varphi)(n)=\sum_{k=0,\,\,k\neq \sigma}^{2\sigma}{^{2\sigma}C_{k}}(-1)^{k+\sigma}\varphi(n-\sigma+k),
\end{equation*}
called the free Jacobi-type matrix of order $\sigma$. In particular, we examine the case where $W_\sigma-W^0_\sigma$ is compact.
Thus in what follows we assume that our sequences tend to the operator coefficients rapidly enough, i.e.,
$a^i_n\to \omega_i$, $b_n\to 0$, as $n\to\pm\infty$. Then the essential spectrum $\varsigma_{ess}$ is given by $\varsigma_{ess}(W_\sigma)= \varsigma_{ess}(W^0_\sigma)= [-{^{2\sigma}C_{\sigma}},4^\sigma-{^{2\sigma}C_{\sigma}}]$ and $W_\sigma$ may have simple eigenvalues $\{E_j^\pm\}_{j=1}^{N_\pm}$ where $N_\pm\in \overline{\mathbb{N}}$, and
$$
E_1^{+}>E_2^{+}>...>4^\sigma-{^{2\sigma}C_{\sigma}}>-{^{2\sigma}C_{\sigma}}>...>E_2^{-}>E_1^{-}.
$$

\begin{theorem}\label{JacobiPoly}
Let $\gamma\geq1,\,\,\{b_n\}_{n\in\Bbb Z}, \, \{a^i_n-\omega_i\}_{n\in\Bbb Z}\in \ell^{\gamma+1/2\sigma}(\mathbb Z)\,$ for all $i\in \{1,\ldots,\sigma\}$.  Then for the eigenvalues $\{E_j^\pm\}_{j=1}^{N_\pm}$ of the operator $W_{\sigma}$ we have: 
\begin{multline*}
\sum_{j=1}^{N_-} |E^{-}_{j}+{^{2\sigma}C_{\sigma}}|^\gamma +
\sum_{j=1}^{N_+} |E^{+}_{j}-(4^\sigma-{^{2\sigma}C_{\sigma}})|^\gamma
\leq
\nu^\gamma_\sigma 
\left(\sum_{n\in\mathbb{Z}} |b_n|^{\gamma+1/2\sigma}+4\sum_{n\in\mathbb{Z}}\sum^\sigma_{k=1}|a^k_{n}-\omega_k|^{\gamma+1/2\sigma}\right),
\end{multline*}
\noindent where
$$
\nu^\gamma_\sigma=2\sigma\,\,(2\sigma+1)^{\gamma-2}\,\,\frac{\Gamma(\frac{4\sigma+1}{2\sigma})\Gamma(\gamma+1)}{\Gamma(\gamma+\frac{2\sigma+1}{2\sigma})} .
$$

\end{theorem}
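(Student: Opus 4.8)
The plan is to realise $W_\sigma$ as a compact form-perturbation of the shifted free operator $\Delta_D^\sigma$ and then compare, via the variational principle, with the purely diagonal Schr\"odinger-type operators already controlled by Theorem \ref{L-Tsigma} and Corollary \ref{LiebGammaN+}. Write $p:=\gamma+\tfrac{1}{2\sigma}$ and recall $W_\sigma^0=\Delta_D^\sigma-{}^{2\sigma}C_\sigma$, so that
$$
W_\sigma+{}^{2\sigma}C_\sigma=\Delta_D^\sigma+P,\qquad P:=W_\sigma-W_\sigma^0,
$$
where $P$ acts by multiplication by $b_n$ on the diagonal and carries the off-diagonal coefficients $a_n^i-\omega_i$. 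Since $\{b_n\},\{a_n^i-\omega_i\}\in\ell^p\subset c_0$, the operator $P$ is compact, whence $W_\sigma+{}^{2\sigma}C_\sigma$ shares the essential spectrum $[0,4^\sigma]$ of $\Delta_D^\sigma$; the numbers $E_j^-+{}^{2\sigma}C_\sigma<0$ and $E_j^++{}^{2\sigma}C_\sigma>4^\sigma$ are therefore genuine discrete eigenvalues lying below and above this band.

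First I would dominate the quadratic form of $P$ by a diagonal one. Splitting $b_n=b_n^+-b_n^-$ into positive and negative parts and using, for real $c$, the elementary estimate $|2\,\mathrm{Re}(c\,\overline{u}v)|\le|c|(|u|^2+|v|^2)$ on each off-diagonal pairing $(a_n^i-\omega_i)\overline{\varphi(n)}\varphi(n+i)$, one obtains in the sense of quadratic forms
$$
\Delta_D^\sigma-\beta^-\;\le\;W_\sigma+{}^{2\sigma}C_\sigma\;\le\;\Delta_D^\sigma+\beta^+,
$$
with the non-negative diagonal sequences
$$
\beta_n^\pm:=b_n^\pm+\sum_{i=1}^\sigma|a_n^i-\omega_i|+\sum_{i=1}^\sigma|a_{n-i}^i-\omega_i|.
$$
The extra term $\sum_i|a_{n-i}^i-\omega_i|$ appears because, after the Cauchy--Schwarz step, the coefficient $a_m^i-\omega_i$ is deposited onto the two diagonal sites $m$ and $m+i$.

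Next I would apply min--max. As $\Delta_D^\sigma-\beta^-$ is exactly the operator $H_D^\sigma$ of \eqref{N-Schrodinger} with admissible potential $\beta^-\ge0$, the left inequality together with the variational principle for eigenvalues counted from the bottom of the band, followed by Theorem \ref{L-Tsigma}, yields $\sum_{j}|E_j^-+{}^{2\sigma}C_\sigma|^\gamma\le\eta_\sigma^\gamma\sum_n(\beta_n^-)^p$; symmetrically, subtracting $4^\sigma$ turns $\Delta_D^\sigma+\beta^+$ into the operator of Corollary \ref{LiebGammaN+} with potential $\beta^+\ge0$, and comparing eigenvalues counted from the top gives $\sum_{j}|E_j^+-(4^\sigma-{}^{2\sigma}C_\sigma)|^\gamma\le\eta_\sigma^\gamma\sum_n(\beta_n^+)^p$. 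Adding the two and using convexity ($p\ge1$) on the $2\sigma+1$ summands of each $\beta_n^\pm$, translation invariance $\sum_n|a_{n-i}^i-\omega_i|^p=\sum_n|a_n^i-\omega_i|^p$, and the pointwise identity $(b_n^+)^p+(b_n^-)^p=|b_n|^p$, the diagonal sums collapse to
$$
\sum_n\bigl[(\beta_n^-)^p+(\beta_n^+)^p\bigr]\le(2\sigma+1)^{p-1}\Bigl(\sum_n|b_n|^p+4\sum_n\sum_{k=1}^\sigma|a_n^k-\omega_k|^p\Bigr).
$$
Since a direct check of the Gamma-function expressions gives $\eta_\sigma^\gamma(2\sigma+1)^{p-1}=\nu_\sigma^\gamma$ (using $p-1=\gamma-1+\tfrac{1}{2\sigma}$), this is precisely the asserted bound.

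The step I expect to be the main obstacle is pinning the constant down exactly. The naive route of bounding both spectral ends by one majorant built from $|b_n|$ overcounts the diagonal contribution by a factor of two; the remedy is the splitting $b_n=b_n^+-b_n^-$, routing $b_n^-$ into the lower bound and $b_n^+$ into the upper bound, so that $(b_n^+)^p+(b_n^-)^p=|b_n|^p$ restores the coefficient $1$ on the $b$-term while the off-diagonal terms, counted at two sites and across both ends, correctly accumulate the coefficient $4$. Verifying that the two min--max comparisons run in the correct directions, and that $\Delta_D^\sigma\mp\beta^\pm$ retain the essential spectrum $[0,4^\sigma]$, is routine once $P$ is known to be compact.
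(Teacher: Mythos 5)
Your proposal is correct and matches the paper's own argument in every essential respect: your Cauchy--Schwarz domination of the off-diagonal entries is exactly the paper's $m\times m$ block matrix inequality, the two spectral edges are handled by the variational principle together with Theorem \ref{L-Tsigma} and Corollary \ref{LiebGammaN+}, and the constant $\nu_\sigma^\gamma=\eta_\sigma^\gamma(2\sigma+1)^{\gamma+1/2\sigma-1}$ emerges from the same Jensen step on the $2\sigma+1$ summands followed by the translation-invariance and $(b_n)_+^p+(b_n)_-^p=|b_n|^p$ identities. The only cosmetic difference is that you split $b_n$ into its positive and negative parts at the level of the operator inequalities, whereas the paper keeps $b_n^{(\pm)}$ intact and passes to $(b_n)_\pm$ only when applying the eigenvalue bounds; the outcome is identical.
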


\section{Auxiliary Results}
We require the following discrete Kolmogorov-type inequality:

\begin{lemma}\label{Ditzian}
For a sequence $\varphi\in\ell^2({\Bbb Z}),\,$ and for $\,\,n>k\geq 1$, we have the following inequality:
$$
\|D^k\varphi \|_{\ell^2(\mathbb{Z})} \leq \|\varphi \|_{\ell^2(\mathbb{Z})}^{1-k/n} \|D^n\varphi \|_{\ell^2(\mathbb{Z})}^{k/n}.
$$
\end{lemma}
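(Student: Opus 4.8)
The plan is to pass to the Fourier side, where the difference operator $D$ is diagonalised, and then to recognise the claimed inequality as a statement about moments of a single positive measure. Following the Fourier convention $\mathcal{F}\varphi(x)=\sum_{n\in\Bbb Z}e^{inx}\varphi(n)$ used above, a one-line index shift shows that $\mathcal{F}(D\varphi)(x)=(e^{-ix}-1)\,\mathcal{F}\varphi(x)$, so that $D$ acts as multiplication by the symbol $e^{-ix}-1$; iterating gives $\mathcal{F}(D^j\varphi)(x)=(e^{-ix}-1)^j\,\mathcal{F}\varphi(x)$ for every $j\geq 0$. Writing $m(x):=|e^{-ix}-1|^2=2-2\cos x=4\sin^2(x/2)$ — which is exactly the Fourier symbol of $\Delta_D=D^*D$ computed earlier — Plancherel's identity turns the three quantities in the lemma into
$$
\|\varphi\|_{\ell^2}^2=\frac{1}{2\pi}\int_{-\pi}^{\pi}|\mathcal{F}\varphi|^2\,dx,\qquad \|D^k\varphi\|_{\ell^2}^2=\frac{1}{2\pi}\int_{-\pi}^{\pi}m^k\,|\mathcal{F}\varphi|^2\,dx,\qquad \|D^n\varphi\|_{\ell^2}^2=\frac{1}{2\pi}\int_{-\pi}^{\pi}m^n\,|\mathcal{F}\varphi|^2\,dx.
$$

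Introducing the finite positive measure $d\mu:=\tfrac{1}{2\pi}|\mathcal{F}\varphi|^2\,dx$ on $[-\pi,\pi]$ (finite since $\varphi\in\ell^2$), the lemma, after squaring both sides, reduces to the moment inequality $\int m^k\,d\mu\leq\bigl(\int d\mu\bigr)^{1-k/n}\bigl(\int m^n\,d\mu\bigr)^{k/n}$. Since $0<k/n<1$ and $k=\bigl(1-\tfrac{k}{n}\bigr)\cdot 0+\tfrac{k}{n}\cdot n$, this is precisely Lyapunov's inequality, i.e.\ the log-convexity of $p\mapsto\int m^p\,d\mu$. Concretely I would apply H\"older with the conjugate exponents $p=n/(n-k)$ and $q=n/k$ to the factorisation $m^k=(m^0)^{1-k/n}(m^n)^{k/n}$ with $f\equiv 1$ and $g=m^n$, obtaining the bound directly; taking square roots then yields the stated inequality.

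Since $0\leq m\leq 4$, all three integrals are finite (each dominated by $4^{n}\|\varphi\|_{\ell^2}^2$), so no integrability issue arises and H\"older is legitimately applicable. The only point needing a remark is the degenerate case: the inequality is trivial when $\varphi=0$, while for $\varphi\neq 0$ the absolutely continuous measure $\mu$ cannot be concentrated on the single zero $\{0\}$ of $m$, so $\int m^n\,d\mu>0$ and no division by zero occurs. Consequently I do not expect a genuine obstacle here — the entire content is the diagonalisation of $D$ together with the elementary convexity of moments, and the main thing to execute carefully is the bookkeeping of the Fourier symbol and the Plancherel normalisation so that the exponents $1-k/n$ and $k/n$ come out exactly as claimed.
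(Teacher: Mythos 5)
Your proof is correct, but it takes a genuinely different route from the paper. The paper argues entirely in physical space by induction on $n$: the key step is the summation-by-parts identity $\|D^m\varphi\|^2=\langle D^*D^m\varphi,D^{m-1}\varphi\rangle\leq\|D^{m+1}\varphi\|\,\|D^{m-1}\varphi\|$ (Cauchy--Schwarz), which combined with the inductive hypothesis at $(k,n)=(m-1,m)$ yields $\|D^m\varphi\|\leq\|D^{m+1}\varphi\|^{m/(m+1)}\|\varphi\|^{1/(m+1)}$, and then the general exponents follow by composing interpolation inequalities. You instead diagonalise $D$ by the discrete Fourier transform, so that all three norms become moments $\int m^j\,d\mu$ of the single measure $d\mu=\frac{1}{2\pi}|\mathcal{F}\varphi|^2\,dx$ with $m(x)=2-2\cos x$, and the lemma collapses to Lyapunov's inequality (H\"older with exponents $n/(n-k)$ and $n/k$). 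Your symbol computation, the Plancherel bookkeeping, and the exponent arithmetic all check out, and your handling of the degenerate case is fine (indeed H\"older needs no such caveat here since every integral is finite). What your approach buys is a one-shot proof for arbitrary $k<n$ with no induction, and it makes transparent that the inequality is just log-convexity of moments of the symbol; what it costs is reliance on translation invariance of $D$, whereas the paper's summation-by-parts induction uses only Cauchy--Schwarz and the adjoint relation $D^*D$, and so would survive in settings where no Fourier diagonalisation is available.
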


\begin{proof}
We proceed by induction and assume we have the required inequality for $n\leq m$. Then:
\begin{eqnarray*}
\| D^m\varphi  \|^2_{\ell^2(\mathbb{Z})}  
= 
\langle D^{m}\varphi , D^{m}\varphi \rangle
= 
\langle D^*D^{m}\varphi , D^{m-1}\varphi \rangle
\leq 
\|D^{m+1}\varphi \|_{\ell^2(\mathbb{Z})}\| D^{m-1}\varphi \|_{\ell^2(\mathbb{Z})},
\end{eqnarray*}

\noindent We thus apply our induction hypothesis, and set $k=m-1$ and $n=m$:
\begin{eqnarray*}
& \| D^m\varphi  \|^2_{\ell^2(\mathbb{Z})} 
& \leq 
\| D^{m+1}\varphi \|_{\ell^2(\mathbb{Z})}\| D^{m}\varphi \|_{\ell^2(\mathbb{Z})}^{(m-1)/m} \|\varphi\|_{\ell^2(\mathbb{Z})}^{1/m}\\
 \Rightarrow &
\| D^m\varphi  \|_{\ell^2(\mathbb{Z})}  
&\leq 
\| D^{m+1}\varphi \|_{\ell^2(\mathbb{Z})}^{m/(m+1)}\|\varphi\|_{\ell^2(\mathbb{Z})}^{1/(m+1)}.
\end{eqnarray*}
We now return to the induction hypothesis:
\begin{eqnarray*}
\|D^k\varphi\|_{\ell^2(\mathbb{Z})}
& \leq &
\|D^m\varphi\|_{\ell^2(\mathbb{Z})}^{k/m}\|\varphi\|_{\ell^2(\mathbb{Z})}^{(m-k)/m}\\
& \leq & 
\| D^{m+1}\varphi \|_{\ell^2(\mathbb{Z})}^{k/(m+1)}\|\varphi\|_{\ell^2(\mathbb{Z})}^{k/m(m+1)}\|\varphi\|_{\ell^2(\mathbb{Z})}^{(m-k)/m}\\
& = & 
\| D^{m+1}\varphi \|_{\ell^2(\mathbb{Z})}^{k/(m+1)}\|\varphi\|_{\ell^2(\mathbb{Z})}^{1- k/(m+1)}.
\end{eqnarray*}
\end{proof}

\noindent We are now equipped to prove an Agmon--Kolmogorov--type inequality:
\begin{proposition}\label{AgmonSigma}
For a sequence $\varphi\, \in \ell^2(\mathbb{Z})$, we have for any $\sigma\in \mathbb{N}$:
        $$
        \|\varphi\|_{\ell^\infty(\Bbb Z)}  \leq  \| \varphi \|^{1-1/2\sigma}_{\ell^2(\mathbb{Z})} \| D^{\sigma}\varphi\|^{1/2\sigma}_{\ell^2(\mathbb{Z})}.
        $$
\end{proposition}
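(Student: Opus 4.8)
The plan is to reduce the general statement to the first-order case $\sigma=1$ and then to feed it into the Kolmogorov-type interpolation already recorded in Lemma \ref{Ditzian}. The first-order inequality will come from a summation-by-parts (telescoping) argument, and everything after that is a single substitution of exponents.

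First I would establish the base case: for every $\varphi\in\ell^2(\mathbb{Z})$,
$$
\|\varphi\|_{\ell^\infty(\mathbb{Z})}^2 \le \|\varphi\|_{\ell^2(\mathbb{Z})}\,\|D\varphi\|_{\ell^2(\mathbb{Z})}.
$$
Since $\varphi\in\ell^2(\mathbb{Z})$ its terms decay to $0$ as $n\to\pm\infty$, so the one-sided telescoping sums converge absolutely and I can represent $\varphi(n)^2$ both as a sum running to the left and, with the opposite sign, as a sum running to the right:
$$
\varphi(n)^2 = \sum_{m<n}\bigl(\varphi(m+1)^2-\varphi(m)^2\bigr) = -\sum_{m\ge n}\bigl(\varphi(m+1)^2-\varphi(m)^2\bigr).
$$
Adding these two identities gives $2\varphi(n)^2$ as a difference of the two sums, which is bounded in absolute value by $\sum_{m\in\mathbb{Z}}\bigl|\varphi(m+1)^2-\varphi(m)^2\bigr|$. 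Factoring each difference as $\varphi(m+1)^2-\varphi(m)^2=(D\varphi)(m)\bigl(\varphi(m+1)+\varphi(m)\bigr)$ and using the triangle inequality bounds this by $\sum_{m}|(D\varphi)(m)|\bigl(|\varphi(m+1)|+|\varphi(m)|\bigr)$. A single application of Cauchy--Schwarz, together with the shift-invariance $\|\varphi(\cdot+1)\|_{\ell^2(\mathbb{Z})}=\|\varphi\|_{\ell^2(\mathbb{Z})}$, then yields $2\varphi(n)^2\le 2\|D\varphi\|_{\ell^2(\mathbb{Z})}\|\varphi\|_{\ell^2(\mathbb{Z})}$ uniformly in $n$, which is precisely the base case.

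Next I would interpolate. Applying Lemma \ref{Ditzian} with $k=1$ and $n=\sigma$ gives
$$
\|D\varphi\|_{\ell^2(\mathbb{Z})} \le \|\varphi\|_{\ell^2(\mathbb{Z})}^{1-1/\sigma}\,\|D^{\sigma}\varphi\|_{\ell^2(\mathbb{Z})}^{1/\sigma}.
$$
Substituting this into the base case and collecting exponents, the power of $\|\varphi\|_{\ell^2(\mathbb{Z})}$ becomes $\tfrac12+\tfrac12(1-\tfrac1\sigma)=1-\tfrac{1}{2\sigma}$ and the power of $\|D^{\sigma}\varphi\|_{\ell^2(\mathbb{Z})}$ becomes $\tfrac{1}{2\sigma}$, which reproduces exactly the claimed inequality.

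I expect the only delicate point to be the base case: justifying the two telescoping representations and their absolute convergence from the $\ell^2$-decay of $\varphi$, and keeping careful track of the constant so that the factors of $2$ cancel and leave the sharp inequality with no constant in front. Once the first-order bound is in hand with constant $1$, the interpolation step is purely formal.
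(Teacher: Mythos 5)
Your proposal is correct and follows essentially the same route as the paper: the paper likewise combines the first-order discrete Agmon inequality $|\varphi(n)|^2\le\|\varphi\|_{\ell^2(\mathbb{Z})}\|D\varphi\|_{\ell^2(\mathbb{Z})}$ with Lemma \ref{Ditzian} applied at $k=1$, $n=\sigma$. The only difference is that the paper cites the base case as well known (from \cite{Sah10}) while you supply the telescoping/Cauchy--Schwarz derivation yourself, which is a sound and self-contained addition (for complex-valued $\varphi$ one just replaces $\varphi(m)^2$ by $|\varphi(m)|^2$ and uses $\bigl||\varphi(m+1)|-|\varphi(m)|\bigr|\le|(D\varphi)(m)|$).
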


\begin{proof}
First we use Lemma \ref{Ditzian} with $k=1,\,n=\sigma$:
\begin{equation*}
\| D\varphi  \|_{\ell^2(\mathbb{Z})}  
\leq 
\| \varphi \|_{\ell^2(\mathbb{Z})}^{1-\frac{1}{\sigma}}\| D^{\sigma}\varphi  \|_{\ell^2(\mathbb{Z})}^{\frac{1}{\sigma}},
\end{equation*}

\noindent and we apply this estimate to the well-known discrete Agmon inequality (see \cite{Sah10}):
\begin{eqnarray*}
\mid\varphi(n)\mid^2  
&\leq &
\| \varphi \|_{\ell^2(\mathbb{Z})} \| D\varphi  \|_{\ell^2(\mathbb{Z})}\\
& \leq & \| \varphi \|_{\ell^2(\mathbb{Z})} \| \varphi \|_{\ell^2(\mathbb{Z})}^{1-\frac{1}{\sigma}}\| D^{\sigma}\varphi  \|_{\ell^2(\Bbb Z)}^{\frac{1}{\sigma}}\\
& = & 
\|\varphi  \|_{\ell^2(\mathbb{Z})}^{2-\frac{1}{\sigma}}\| D^{\sigma}\varphi  \|_{\ell^2(\mathbb{Z})}^{\frac{1}{\sigma}}.
\end{eqnarray*}

\end{proof}

\begin{proposition}\label{DGSI}
Let $\{\psi_j\}_{j=1}^N$ be an orthonormal system of sequences in $\ell^2(\mathbb Z)$, i.e. $\langle\psi_j, \psi_k\rangle=\delta_{jk}$,
and let $\rho(n) := \sum_{j=1}^N |\psi_j(n)|^2$.
Then
\begin{equation*}
\sum_{n\in\Bbb Z} \rho^{2\sigma+1}(n)\, 
 \le \sum_{j=1}^N \sum_{n\in\Bbb{Z}} |{D^\sigma\psi}_j(n)|^2\, .
\end{equation*}
\end{proposition}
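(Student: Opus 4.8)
The plan is to reduce the orthonormal-system statement to the single-function inequality already established in Proposition \ref{AgmonSigma}, the key device being to apply that inequality not to the individual eigensequences but to a cleverly chosen ``local'' sequence built from the reproducing structure of the system. For each fixed $n\in\mathbb{Z}$ I would introduce the sequence $v_n\in\ell^2(\mathbb{Z})$ defined by $v_n(m):=\sum_{j=1}^N\overline{\psi_j(n)}\,\psi_j(m)$, which is precisely the orthogonal projection of the Kronecker delta $\delta_n$ onto $\mathrm{span}\{\psi_j\}$. Two elementary computations are then the engine of the argument: evaluating at $m=n$ gives $v_n(n)=\sum_j|\psi_j(n)|^2=\rho(n)$, while expanding $\|v_n\|_{\ell^2(\mathbb{Z})}^2=\sum_m|v_n(m)|^2$ and summing in $m$ collapses the double sum via $\sum_m\psi_j(m)\overline{\psi_{j'}(m)}=\delta_{jj'}$ (orthonormality), so that $\|v_n\|_{\ell^2(\mathbb{Z})}^2=\rho(n)$ as well. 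Thus $v_n$ attains the value $\rho(n)$ at $n$ while having squared norm exactly $\rho(n)$.

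Next I would feed $v_n$ into the Agmon--Kolmogorov inequality of Proposition \ref{AgmonSigma}. Since $\|v_n\|_{\ell^\infty(\mathbb{Z})}\ge |v_n(n)|=\rho(n)$ and $\|v_n\|_{\ell^2(\mathbb{Z})}=\rho(n)^{1/2}$, the inequality $\|v_n\|_{\ell^\infty}\le\|v_n\|_{\ell^2}^{1-1/2\sigma}\|D^\sigma v_n\|_{\ell^2}^{1/2\sigma}$ becomes $\rho(n)\le\rho(n)^{(2\sigma-1)/4\sigma}\|D^\sigma v_n\|_{\ell^2(\mathbb{Z})}^{1/2\sigma}$. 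The exponents are arranged so that isolating the kinetic factor and raising to the power $4\sigma$ yields precisely the pointwise bound
$$
\rho(n)^{2\sigma+1}\ \le\ \|D^\sigma v_n\|_{\ell^2(\mathbb{Z})}^2 \, .
$$
Here it is essential that the power $2\sigma+1$ appearing in the statement emerges automatically from the pair $(1-1/2\sigma,\ 1/2\sigma)$ of Agmon--Kolmogorov exponents, which is why the construction of $v_n$ with matching value and norm is the right one.

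Finally I would sum the pointwise bound over all $n\in\mathbb{Z}$ and, crucially, resist the temptation to estimate $\|D^\sigma v_n\|^2$ any further until the summation is done. Writing $D^\sigma v_n(m)=\sum_j\overline{\psi_j(n)}\,(D^\sigma\psi_j)(m)$ (the difference operator acting only in the $m$ variable), expanding the square, and summing first in $n$ invokes orthonormality a second time through $\sum_n\overline{\psi_j(n)}\psi_{j'}(n)=\delta_{jj'}$; all cross terms vanish and the remaining sum over $m$ and $j$ is exactly $\sum_{j=1}^N\sum_{m\in\mathbb{Z}}|(D^\sigma\psi_j)(m)|^2$, which is the right-hand side of the proposition. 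The main obstacle, then, is conceptual rather than computational: the single-function inequality on its own cannot see the orthonormality constraint (and indeed the claimed bound is false without it), so the real work is recognizing that orthonormality must be deployed twice — once to normalize $v_n$ and once to telescope the kinetic energies after summation — and that $\|D^\sigma v_n\|^2$ must be kept in its unexpanded bilinear form for the second collapse to occur.
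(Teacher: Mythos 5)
Your proof is correct and is essentially the paper's own argument: your sequence $v_n$ is exactly the linear combination $\sum_{j}\xi_j\psi_j$ with the paper's choice $\xi_j=\overline{\psi_j(n)}$, and both proofs invoke orthonormality twice, once to get $\|v_n\|_{\ell^2(\mathbb{Z})}^2=\rho(n)$ and once to collapse $\sum_{n}\|D^\sigma v_n\|_{\ell^2(\mathbb{Z})}^2$ to $\sum_{j}\|D^\sigma\psi_j\|_{\ell^2(\mathbb{Z})}^2$. The exponent bookkeeping leading to $\rho(n)^{2\sigma+1}\le\|D^\sigma v_n\|_{\ell^2(\mathbb{Z})}^2$ matches the paper's derivation exactly.
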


\begin{proof}

\noindent Let $\xi=(\xi_1, \xi_2, \dots, \xi_N)\in {\Bbb C}^N$. 
By Proposition \ref{AgmonSigma}, we have:
\begin{eqnarray*}
\Big|\sum_{j=1}^N\xi_j\psi_j(n)\Big|^2 
& \leq &
 \Big\| \sum_{j=1}^N\xi_j\psi_j\Big\|_{\ell^2(\Bbb Z)}^{(2\sigma-1)/\sigma}\,\,\Big\|D^\sigma\sum_{j=1}^N\xi_j\psi_j\Big\|_{\ell^2(\Bbb Z)}^{1/\sigma}\\
&=&
\Bigl(\sum_{j,k=1}^N\xi_j\bar\xi_k\langle\psi_j,\psi_k\rangle\Bigr)^{(2\sigma-1)/2\sigma}\Bigl(\sum_{j,k=1}^N \xi_j\bar\xi_k \langle D^\sigma\psi_j,D^\sigma\psi_k\rangle\Bigr)^{1/2\sigma}\\
& \le & 
\Bigl(\sum_{j=1}^N |\xi_j|^2\Bigr)^{(2\sigma-1)/2\sigma}\Bigl(\sum_{j,k=1}^N\xi_j\bar\xi_k \langle D^\sigma\psi_j,D^\sigma\psi_k\rangle\Bigr)^{1/2\sigma}.
\end{eqnarray*}

\noindent Let $\xi_j := \overline{\psi_j(n)}$ and as $\rho(n) = \sum_{j=1}^N |\psi_j(n)|^2$:
\begin{eqnarray*}
\rho^2(n)	
& \le & 
\rho^{(2\sigma-1)/2\sigma}(n)\Bigl(\sum_{j,k=1}^N \psi_j(n)\overline{\psi_k(n)}\langle D^\sigma\psi_j,D^\sigma\psi_k\rangle\Bigr)^{1/2\sigma}\\
\Rightarrow \rho^{2\sigma+1}(n)
& \le &	
\sum_{j,k=1}^N \psi_j(n)\overline{\psi_k(n)} \langle D^\sigma\psi_j,D^\sigma\psi_k\rangle\\
\Rightarrow\sum_{n\in\Bbb{Z}} \rho^{2\sigma+1}(n)\,
&
\le
&
\sum_{j=1}^N \Bigl(\sum_{n\in\Bbb{Z}} |D^\sigma\psi_j(n)|^2 \,\Bigr).
\end{eqnarray*}

\end{proof}

\section{Proof of Theorem \ref{L-Tsigma}}

We take the inner product with $\psi_j(n)$ on \eqref{N-Schrodinger} and sum both sides of the equation with respect to $j$. We obtain:
\begin{align*}
\sum_{j=1}^N e_j
&=
\sum_{j=1}^N\Bigl(\sum_{n\in\Bbb{Z}}|D^{\sigma}\psi_j(n)|^2\Bigr) \,\,- \,\,\sum_{j=1}^N\Bigl(\sum_{n\in\Bbb{Z}}b_n|\psi_j(n)|^2\Bigr).
\end{align*}
\noindent We now use Proposition \ref{DGSI} and apply the appropriate H\"older's inequality, i.e:
\begin{equation}
\sum_{j=1}^Ke_j
\geq \sum_{n\in\Bbb{Z}}\Bigl(\sum_{j=1}^N |\psi_j(n)|^2\Bigr)^{2\sigma+1}\, 
-\Bigl(\sum_{n\in\Bbb{Z}} b_n^{(2\sigma+1)/2\sigma}\, \Bigr)^{2\sigma/(2\sigma+1)} 
\Bigl(\sum_{n\in\Bbb{Z}}\Bigl(\sum_{j=1}^N |\psi_j(n)|^2\Bigr)^{2\sigma+1}\, \Bigr)^{1/(2\sigma+1)}.\label{DisSchN}
\end{equation}
\noindent We define 
$$
\chi:= \Bigl(\sum_{n\in\Bbb{Z}}\Bigl(\sum_{j=1}^N |\psi_j(n)|^2\Bigr)^{2\sigma+1}\,\Bigr)^{1/(2\sigma+1)},\,\,\,\quad\,\,\varsigma:=\Bigl(\sum_{n\in\Bbb{Z}} b_n^{(2\sigma+1)/2\sigma}\, \Bigr)^{2\sigma/(2\sigma+1)}.
$$
The latter inequality can be written as
$$
\chi^{2\sigma+1} - \varsigma \chi \le \sum_{j=1}^N e_j \label{dlt1}.
$$ 
The LHS is maximal when
$$
\chi = \frac{1}{(2\sigma+1)^{1/2\sigma}}\Bigl(\sum_{n\in \Bbb{Z}} b_n^{(2\sigma+1)/2\sigma}\,\Bigr)^{1/(2\sigma+1)}.
$$
Substituting this into \eqref{DisSchN}, we obtain:
\begin{eqnarray*}
\sum_{j=1}^N e_j 
& \geq &
\Big(\frac{1}{(2\sigma+1)^{(2\sigma+1)/2\sigma}}\sum_{n\in\Bbb{Z}}         b_n^{(2\sigma+1)/2\sigma}\Big)\,-\frac{1}{(2\sigma+1)^{1/2\sigma}}\sum_{n\in \Bbb{Z}} b_n^{(2\sigma+1)/2\sigma}\,.\\
&   =  &
 \frac{-2\sigma}{(2\sigma+1)^{(2\sigma+1)/2\sigma}}\sum_{n\in\Bbb{Z}} b_n^{(2\sigma+1)/2\sigma}.
\end{eqnarray*}
Therefore:
\begin{equation}\label{L-T1sigma}
\sum_{j=1}^N |e_j|\leq \frac{2\sigma}{(2\sigma+1)^{(2\sigma+1)/2\sigma}}\sum_{n\in\Bbb{Z}} b_n^{(2\sigma+1)/2\sigma}.
\end{equation}

\noindent We lift this bound now with regards to moments by using the standard Aizenman--Lieb procedure. We let $\{e_j(\tau)\}_{j=1}^{N}$ be the negative eigenvalues of the operator $\Delta_D^\sigma -(b_n - \tau)_+$. By the variational principle for the negative eigenvalues $\{-(|e_j| -\tau)_+\}_{j=1}^{N}$ of the operator $\Delta_D^\sigma -(b_n - \tau)$ we have:
$$
(|e_j| - \tau)_+ \leq |e_j(\tau)|.
$$

\noindent By this estimate, we find that
\begin{eqnarray*}
\sum^N_{j=1} |e_j|^\gamma  
&=& 
\frac{1}{\mathcal B(\gamma-1,2)}\, \int_0^\infty\tau^{\gamma-2}(\sum^N_{j=1}|e_j|-\tau)_+ \, d\tau\\
&\leq &
\frac{1}{\mathcal B(\gamma-1,2)},\int_0^\infty \tau^{\gamma-2}\sum^N_{j=1} e_j(\tau)_+\,d\tau\\
&\leq & 
\frac{2\sigma}{(2\sigma+1)^{(2\sigma+1)/2\sigma}}\,\frac{1}{\mathcal B(\gamma-1,2)}\, \int_0^\infty \tau^{\gamma-2}\sum_{n\in\mathbb{Z}} (b_n - \tau)_+^{(2\sigma+1)/2\sigma}d\tau,
\end{eqnarray*}
by \eqref{L-T1sigma} above. Thus, after a change of variable:
\begin{eqnarray*}
\sum^N_{j=1} |e_j|^\gamma 
&\leq & 
\frac{2\sigma}{(2\sigma+1)^{(2\sigma+1)/2\sigma}}\frac{\Gamma(\frac{4\sigma+1}{2\sigma})\Gamma(\gamma+1)}{\Gamma(\gamma+\frac{2\sigma+1}{2\sigma})}\,\sum_{n\in\mathbb{Z}}b_n^{\gamma+1/2\sigma},
\end{eqnarray*}
completing our proof.


\section{Proof of Theorem \ref{JacobiPoly}
}

\noindent We have the following matrix bounds for square, $m\times m$ matrices, as given in \cite{HS02}. For $a_n^m,\, \omega_m\,\in\mathbb{R}$, we have:
\begingroup
\setlength{\arraycolsep}{0.1cm}
\begin{eqnarray*}
\small\left(
        \begin{array}{ccccc}
        -|a^m_n-\omega_m| & 0      &\ldots    & 0       & \omega_m   \\
         0            & 0      & \ldots   & 0      & 0  \\
        \vdots        & \vdots & \ddots   & \vdots & \vdots \\
         0            & 0      & \ldots   & 0      & 0 \\
        \omega_m        & 0      & \ldots   & 0      & -|a^m_n-\omega_m| 
        \end{array} 
\right) 
& \leq & 
 \left(
        \begin{array}{ccccc}
         0 & 0      &\ldots    & 0       & a^m_n   \\
         0            & 0      & \ldots   & 0      & 0  \\
        \vdots        & \vdots & \ddots   & \vdots & \vdots \\
         0            & 0      & \ldots   & 0      & 0 \\
        a^m_n        & 0      & \ldots   & 0      &  0
        \end{array}
\right) 
 \leq 
\left(
        \begin{array}{ccccc}
        |a^m_n-\omega_m| & 0      &\ldots    & 0       & \omega_m   \\
         0            & 0      & \ldots   & 0      & 0  \\
        \vdots        & \vdots & \ddots   & \vdots & \vdots \\
         0            & 0      & \ldots   & 0      & 0 \\
        \omega_m        & 0      & \ldots   & 0      & |a^m_n-\omega_m| 
        \end{array}  
\right).
\end{eqnarray*} 

\endgroup
\noindent  We thus use this on each block of indices of $W_{\sigma}$:
\begin{multline}\label{PolyMatrixBound}
W_\sigma(\{a_n^1\equiv\omega_1\},\ldots,\{a_n^\sigma\equiv\omega_\sigma\},\{b_n^{(-)}\})
\leq 
W_\sigma(\{a_n^1\},\ldots,\{a_n^\sigma\},\{b_n\})\\
\leq 
W_\sigma(\{a_n^1\equiv\omega_1\},\ldots,\{a_n^\sigma\equiv\omega_\sigma\},\{b_n^{(+)}\}).
\end{multline}

\noindent where $b_n^{(\pm)}$ is given by 
\begin{eqnarray*}
b_n^{(\pm)}
&=&
b_n\pm\Big((|a^1_{n-1}-\omega_1|+|a^1_n-\omega_1|)+
 \ldots + (|a^\sigma_{n-\sigma}-\omega_\sigma|+|a^\sigma_{n}-\omega_\sigma|)\Big),
\end{eqnarray*}
i.e.
$$
b_n^{(\pm)}=b_n\pm\Big(\sum^\sigma_{k=1}|a^k_{n-k}-\omega_k|+|a^k_n-\omega_k|\Big).
$$
\noindent Now we relate these to our Schr\"odinger-type operators:
\begin{align}\label{Wsigma+}
\Delta_D^\sigma - 4^\sigma + b_n =& \,\,W_\sigma^0-(4^\sigma-{^{2\sigma}C_{\sigma}})
+ b_n=\,\,W_\sigma\left(\{a^1_n\equiv\omega_1\},\ldots,\{a^\sigma_n\equiv\omega_\sigma\},\{b_n -(4^\sigma-{^{2\sigma}C_{\sigma}}) \}\right), 
\end{align}
\noindent and
\begin{equation}\label{Wsigma-}
\Delta_D^\sigma + b_n = W_\sigma^0+{^{2\sigma}C_{\sigma}}
+b_n=W_\sigma\left(\{a^1_n\equiv\omega_1\},\ldots,\{a^\sigma_n\equiv\omega_\sigma\},\{b_n +{^{2\sigma}C_{\sigma}}\}\right).
\end{equation}
\noindent Now ($E_j^+
-(4^\sigma-{^{2\sigma}C_{\sigma}})$) are positive eigenvalues of $W_\sigma(\{a^1_n\},\ldots,\{a^\sigma_n\},\{b_n -(4^\sigma-{^{2\sigma}C_{\sigma}}) \})$.
Thus by using \eqref{PolyMatrixBound}, and the Variational Principle, we have:
\begin{multline*}
W_\sigma(\{a^1_n\},\ldots,\{a^\sigma_n\},\{b_n -(4^\sigma-{^{2\sigma}C_{\sigma}}) \})
\leq
W_\sigma\left(\{a^1_n\equiv\omega_1\},\ldots,\{a^\sigma_n\equiv\omega_\sigma\},\{b_n^{(+)} -(4^\sigma-{^{2\sigma}C_{\sigma}}) \}\right),
\end{multline*}
\begin{equation}\label{E-e-Bound-sigma}
\Rightarrow|E_j^+-(4^\sigma-{^{2\sigma}C_{\sigma}})|\leq e_j^+,
\end{equation}
where $e_j^+$ are the positive eigenvalues of 
$$
W_\sigma\left(\{a^1_n\equiv\omega_1\},\ldots,\{a^\sigma_n\equiv\omega_\sigma\},\{b_n^{(+)} -(4^\sigma-{^{2\sigma}C_{\sigma}}) \}\right)=\Delta_D^\sigma - 4^\sigma + b_n^{(+)}.
$$
\noindent Let us now define
$
(b_n)_+:=\max (b_n, 0), \,\,  (b_n)_-:= - \min (b_n, 0).
$
Then, by Corollary \ref{LiebGammaN+} for the positive eigenvalues of our operator, we have:
\begin{equation*}
\sum_{j=1}^{N_+}(e^{+}_j)^\gamma \leq \eta^\gamma_\sigma
 \sum_{n\in\mathbb{Z}}  (b_n^{(+)})_{+}^{\gamma+1/2\sigma}.
\end{equation*}

\noindent Thus, applying \eqref{E-e-Bound-sigma}:
\begin{equation}\label{Almost-final1}
\sum_{j=1}^{N_+}|E^{+}_j-(4^\sigma-{^{2\sigma}C_{\sigma}})|^\gamma \leq \eta^\gamma_\sigma
 \sum_{n\in\mathbb{Z}}  \Big((b_n)_{+} +  \sum^\sigma_{k=1}\big(|a^k_{n-k}-\omega_k|+|a^k_n-\omega_k|\big)\Big)^{\gamma+1/2\sigma},
\end{equation}
where
$$
\eta_{\sigma}^{\gamma}:=\frac{2\sigma}{(2\sigma+1)^{(2\sigma+1)/2\sigma}}\frac{\Gamma(\frac{4\sigma+1}{2\sigma})\Gamma(\gamma+1)}{\Gamma(\gamma+\frac{2\sigma+1}{2\sigma})}.
$$

\noindent Similarly, using Theorem \ref{L-Tsigma} on \eqref{Wsigma-}: 
\begin{equation}\label{Almost-final2}
\sum_{j=1}^{N_-}|E^{-}_j+{^{2\sigma}C_{\sigma}}|^\gamma \leq \eta^\gamma_\sigma\sum_{n\in\mathbb{Z}}  \Big((b_n)_{-}+\sum^\sigma_{k=1}\big(|a^k_{n-k}-\omega_k|+|a^k_n-\omega_k|\big)\Big)^{\gamma+1/2\sigma}.
\end{equation}
Using the following application of Jensen's inequality, i.e. for
$\, i\in\{1,\ldots,2\sigma+1\}\,$, let $\alpha_i,\,q \in \mathbb{R},$ with $q\geq1$,
\begin{equation*} 
\left(\sum_{i=1}^{2\sigma+1}\alpha_i\right)^q   \leq (2\sigma+1)^{q-1} \left(\sum_{i=1}^{2\sigma+1}\alpha_i^q\right),
\end{equation*}
to each of \eqref{Almost-final1} and \eqref{Almost-final2}, we have:
\begin{multline*}
\Big((b_n)_{\pm} +  \sum^\sigma_{k=1}\big(|a^k_{n-k}-\omega_k|+|a^k_n-\omega_k|\big)\Big)^{\gamma+1/2\sigma}
\\
\le (2\sigma+1)^{\gamma-(2\sigma-1)/2\sigma}\Big((b_n)_\pm^{\gamma+1/2\sigma}+\sum^\sigma_{k=1}\left(|a^k_{n-k}-\omega_k|^{\gamma+1/2\sigma}+|a^k_n-\omega_k|^{\gamma+1/2\sigma}\right)\Big).
\end{multline*}

\noindent Summing these two inequalities, we arrive at:
\begin{multline*}
\sum_{j=1}^{N_-} |E^{-}_{j}+{^{2\sigma}C_{\sigma}}|^\gamma +
\sum_{j=1}^{N_+} |E^{+}_{j}-(4^\sigma-{^{2\sigma}C_{\sigma}})|^\gamma
\leq
\nu^\gamma_\sigma 
\left(\sum_{n\in\mathbb{Z}} |b_n|^{\gamma+1/2\sigma}+4\sum_{n\in\mathbb{Z}}\sum^\sigma_{k=1}|a^k_{n}-\omega_k|^{\gamma+1/2\sigma}\right),
\end{multline*}
\noindent where
$$
\nu^\gamma_\sigma=2\sigma\,\,(2\sigma+1)^{\gamma-2}\frac{\Gamma(\frac{4\sigma+1}{2\sigma})\Gamma(\gamma+1)}{\Gamma(\gamma+\frac{2\sigma+1}{2\sigma})},
$$

\noindent and the proof of Theorem \ref{JacobiPoly} is complete.

\bibliography{bibliography}{}
\bibliographystyle{alpha}

\end{document}